\newtheorem{theorem}{Theorem}[section]
\newtheorem{corollary}{Corollary}[section]
\newtheorem{lemma}{Lemma}[section]
\newtheorem{proposition}{Proposition}[section]
\theoremstyle{remark}
\newtheorem{remark}{Remark}[section]
\theoremstyle{definition}
\numberwithin{equation}{section}
\newcommand{\bbr}{\mathbb{R}}
\newcommand{\R}{{\mathbb R}}
\renewcommand{\I}{\mathcal{I}}
\renewcommand{\P}{\mathcal{P}}
\newcommand{\eps}{{\varepsilon}}
\begin{document}
\title[Asymptotic profile for a Schr\"{o}dinger-Poisson-Slater equation]
{Asymptotic profile of ground states\\ for the Schr\"{o}dinger-Poisson-Slater equation}%

\author{Zeng Liu}
\address{Department of Mathematics\\ Suzhou University of Science and Technology\\ Suzhou 215009\\ P.R. China}
\email{zliu@mail.usts.edu.cn (Zeng Liu)}

\author{Vitaly Moroz}
\address{Department of Mathematics\\ Swansea University\\
Swansea SA1~8EN\\ Wales, United Kingdom}	
\email{v.moroz@swansea.ac.uk}

\keywords{Schr\"{o}dinger-Poisson-Slater equation; Schr\"odinger-Maxwell equation; Riesz potential; nonlocal semilinear elliptic problem, Poho\v{z}aev identity, variational method, groundstate}

\subjclass[2010]{35J61 (Primary) 35B09, 35B33, 35B40, 35Q55, 45K05}

\date{\today}

\begin{abstract}
We study the Schr\"{o}dinger-Poisson-Slater equation
$$-\Delta u + u+\lambda(I_{2}*|u|^2)u=|u|^{p-2}u\quad\text{in $\R^3$},$$
where $p\in (3,6)$ and $\lambda>0$.  By using direct variational analysis based on the comparison of the ground state energy levels, we obtain a characterization of the limit profile of the positive ground states for $\lambda\to \infty$.
\end{abstract}

\maketitle

\setcounter{tocdepth}{2}

\section{Introduction}

We are concerned with the asymptotic profiles of positive ground state solutions of a Schr\"{o}dinger-Poisson-Slater equation
\begin{equation}\tag{$P_\lambda$}\label{eqPlam}
 -\Delta u + u+\lambda(I_{2}*|u|^2)u=|u|^{p-2}u\quad\text{in $\R^3$},
\end{equation}
where $p>2$ and $\lambda> 0$. Here $I_2(x):=(4\pi|x|)^{-1}$ is the Riesz potential and $*$ denotes the standard convolution in $\R^3$.
By a {\em ground state} solution of \eqref{eqPlam} we understand a
weak solution $u_0 \in H^1(\R^3)\setminus\{0\}$ which has a minimal energy amongst all nontrivial solutions of \eqref{eqPlam}, namely $\mathcal{I}_{\lambda}(u_0)\leq \mathcal{I}_{\lambda}(u)$ for any solution $u$ of \eqref{eqPlam},
where $\mathcal{I}_{\lambda}: H^1(\R^3)\to \R$ is the corresponding functional of \eqref{eqPlam} defined as
$$
\mathcal{I}_{\lambda}(u)=\frac{1}{2}\int_{\R^3}|\nabla u|^2dx+\frac{1}{2}\int_{\R^3}|u|^2dx+\frac{\lambda}{4}\int_{\R^3}(I_{2}*|u|^2)|u|^2dx-\frac{1}{p}\int_{\R^3}|u|^p dx,\ \ u\in H^1(\R^3).
$$

Equation~\eqref{eqPlam} appears in quantum mechanics as an approximation of the Hartree-Fock model of a quantum many-body system of electrons \citelist{\cite{BLS03}\cite{L84}\cite{Catto}}, and in semi-conductor theory \cite{BF98} (under the name of the Schr\"odinger-Maxwell equation). From a mathematical point of view, as pointed
out in \cite{R10}, this model presents a combination of repulsive
forces (given by the nonlocal term) and attractive forces (given by
the nonlinearity). The interaction between the two forces gives rise to
unexpected situations concerning the existence, non-existence and
multiplicity of solutions, and their qualitative behavior, see e.g.
\citelist{\cite{A08}\cite{AR08}\cite{AP08}\cite{CFPT20}\cite{CM16}\cite{D02}\cite{DM04}\cite{JZ11}\cite{LLS17}\cite{LM20}\cite{LWZ16}\cite{LZH19}\cite{Mercuri}\cite{SW20}\cite{R06}\cite{R10}\cite{WZ07}} and the
references therein.

It is shown in \cite{R06} that for $p\in (2,3)$  equation \eqref{eqPlam} has no solutions for $\lambda\ge 1/4$ and has at least two positive radial solutions for small $\lambda>0$. One of the solutions is a ground state and a local minimizer, another is a higher energy mountain pass type solution. For $p\in (3,6)$ equation \eqref{eqPlam} has a positive radial ground state for all $\lambda>0$ \citelist{\cite{R06}\cite{AP08}}. For $p=3$ there is at least one radial positive solution for small $\lambda>0$ and no positive solutions for $\lambda\ge 1/4$ \cite{R06}.

Our goal in this work is to describe the asymptotic profile of the ground state solutions of \eqref{eqPlam} when $p\in(3,6)$ and $\lambda\to \infty$. Observe that for $p\neq 3$ the rescaling
\begin{equation}\label{eqResc}
v(x)=\lambda^{\frac{1}{2(3-p)}}u\big(\lambda^{\frac{p-2}{4(3-p)}}x\big)
\end{equation}
transforms \eqref{eqPlam} into the equation
\begin{equation}\label{eqsps}
-\Delta v+\lambda^{\frac{p-2}{4(3-p)}}v+(I_2*|v|^2)v=|v|^{p-2}v\quad\text{in $\R^3$}.
\end{equation}
Then ``zero mass'' equation
\begin{equation}\label{eqssps}
-\Delta v+(I_2*|v|^2)v=|v|^{p-2}v\quad\text{in $\R^3$}
\end{equation}
becomes the {\em formal} limit equation for \eqref{eqsps} either if $p<3$ and $\lambda\to 0$, or $p>3$ and $\lambda\to \infty$. Denote by
$$
E(\bbr^3)=\{u\in D^{1,2}(\bbr^3): \int_{\bbr^3}(I_2*|u|^2)|u|^2dx<\infty\}
$$
the Coulomb-Sobolev space \citelist{\cite{L84}\cite{R10}} and by $E_r(\bbr^3)$ the subspace of radial functions in $E(\bbr^3)$. The space $E(\bbr^3)$ is the natural domain for the energy that corresponds to  \eqref{eqssps}. It is proved in \cite{R10,IR12} that equation \eqref{eqssps}
has a positive radial solution in the space $E_r(\R^3)$ (see below)
for $p\in (18/7, 3)\cup(3, 6)$. Such a radial solution is a global minimizer in $E_r(\R^3)$ for $p\in (18/7, 3)$ and a ground state in $E_r(\R^3)$ for $p\in(3,6)$.
Equation \eqref{eqssps} also has a positive ground state in $E(\R^3)$ (see below) for $p\in (3, 6)$, see \cite{R10}. It is an open problem whether the ground state in $E(\R^3)$ is in fact radial (and hence the two ground states are the same) for $p\in(3,6)$ or for a subset of the interval $(3,6)$, see \citelist{\cite{R10}\cite{MMS16}}.

The asymptotic behavior of the solutions for \eqref{eqPlam} were studied in \cite{DW05,R05,R10} as $\lambda\to 0$. In \cite{DW05,R05}, by using a Lyapunov-Schmidt type perturbation argument, the authors prove that for $p\in (2, 18/7)$ there exist a family of positive radially symmetric bound states of \eqref{eqPlam} that concentrates around a sphere as $\lambda\to 0$.  In \cite{R10} the author proves that when $p\in (18/7, 3)$ the rescaled family of the radial ground states $v_{\lambda}=\lambda^{\frac{1}{2(3-p)}}u_{\lambda}\big(\lambda^{\frac{p-2}{4(3-p)}}x\big)$ of \eqref{eqPlam} converges as $\lambda\to 0$ in $E_r(\R^3)$ to a positive radial ground state (global minimizer) $v_0\in E_r(\R^3)$ of \eqref{eqssps}. To our best knowledge, we are not aware of any results on the asymptotic behaviour of ground state solutions of \eqref{eqPlam} when $p\in (3, 6)$ and $\lambda\to \infty$.

The energy functional $\mathcal{I}_\lambda$ transforms after the rescaling \eqref{eqResc} to the energy
\begin{equation*}
\widetilde{\mathcal{I}}_{\lambda}(v):=\frac{1}{2}\int_{\R^3}|\nabla v|^2dx+\frac{\lambda^{\frac{p-2}{4(3-p)}}}{2}\int_{\R^3}|v|^2dx+\frac{1}{4}\int_{\R^3}(I_{2}*|v|^2)|v|^2dx-\frac{1}{p}\int_{\R^3}|v|^p dx.
\end{equation*}
We denote by
\begin{equation*}
	\widetilde{\mathcal{I}}_{\infty}(v):=\frac{1}{2}\int_{\R^3}|\nabla v|^2dx+\frac{1}{4}\int_{\R^3}(I_{2}*|v|^2)|v|^2dx-\frac{1}{p}\int_{\R^3}|v|^p dx
\end{equation*}
the energy that corresponds to \eqref{eqssps}.
If $p\in(3, 6)$, {\em formally} we have $\widetilde{\mathcal{I}}_{\lambda}\to \widetilde{\mathcal{I}}_{\infty}$ as $\lambda\to\infty$. Observe however that when $p\in(3, 6)$ the energy $\widetilde{\I}_\infty$ is well posed in the space $E(\R^3)$ (cf. \cite{R10,IR12}), while $\widetilde{\I}_\lambda$ with $\lambda>0$ is well-posed in $H^1(\R^3)$. Since $E(\R^3)\subsetneq H^1(\R^3) $, small perturbation arguments in the spirit of the Lyapunov-Schmidt reduction are not directly applicable to the family $\widetilde{\I}_\lambda$ in the limit $\lambda\to \infty$. Using direct variational analysis based on the comparison of the ground state energy levels for two problems, we establish the following result.

\begin{theorem}\label{thm01}
	Let $3<p<6$.  Then for any sequence $\{\lambda_n\}$ with $\lambda_n\to\infty$ as $n\to \infty$, there exists $\{\xi_{\lambda_n}\}\subset \R^3$ such that the rescaled family of ground states of $(P_\lambda)$
	$$
	v_{\lambda_n}(x):={\lambda_n}^{\frac{1}{2(3-p)}}u_{{\lambda_n}}({\lambda_n}^{\frac{p-2}{4(3-p)}}(x+\xi_{\lambda_n}))
	$$
	converges in $E(\R^3)$ to a positive ground state solution $\overline{v}_\infty$ of the formal limit equation \eqref{eqssps}.
	Moreover, $\lambda^{\frac{p-2}{4(3-p)}}\|v_\lambda\|_2^2\to 0$ as $\lambda\to\infty$.
\end{theorem}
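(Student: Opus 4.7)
After the rescaling \eqref{eqResc}, set $\mu := \lambda^{(p-2)/(4(3-p))}$, so $\mu \downarrow 0$ as $\lambda \to \infty$ and the rescaled equation reads
\begin{equation*}
 -\Delta v + \mu v + (I_2 * |v|^2) v = |v|^{p-2} v, \qquad v \in H^1(\R^3).
\end{equation*}
Combining the Nehari identity $\|\nabla v\|_2^2 + \mu \|v\|_2^2 + C(v) = \|v\|_p^p$ (with $C(v) := \int (I_2 * |v|^2)|v|^2$) and the Pohozaev identity $\tfrac{1}{2}\|\nabla v\|_2^2 + \tfrac{3\mu}{2}\|v\|_2^2 + \tfrac{5}{4}C(v) = \tfrac{3}{p}\|v\|_p^p$ yields, for every weak solution,
\begin{equation*}
 \widetilde{\mathcal{I}}_\lambda(v) = \frac{2(p-3) \|\nabla v\|_2^2 + (p-2)\mu \|v\|_2^2}{5p-12}, \qquad C(v) = \frac{2(6-p) \|\nabla v\|_2^2 - 6(p-2)\mu \|v\|_2^2}{5p-12}.
\end{equation*}
At $\mu = 0$ this gives $c_\infty = \frac{2(p-3)}{5p-12}\|\nabla \overline{v}_\infty\|_2^2 > 0$ for the limit ground-state level.

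For the energy comparison $c_\lambda \to c_\infty$, I would first prove the upper bound $\limsup_\lambda c_\lambda \le c_\infty$ using the min-max characterisation of $c_\lambda$ available for $p \in (3,6)$ (infimum over a Pohozaev-type natural constraint, in the spirit of \cite{R06,AP08}). Approximate $\overline{v}_\infty \in E(\R^3)$ in $E$-norm by a sequence $\psi_n \in C_c^\infty(\R^3)$; for each $n$ and each small $\mu \ge 0$, locate the scaling parameter $t_{n,\mu} > 0$ at which $\psi_n(\cdot/t)$ realises the min-max level, noting that $t_{n,\mu}$ solves a polynomial equation depending continuously on $\mu$ near $\mu = 0$. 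Then $c_\lambda \le \widetilde{\mathcal{I}}_\lambda(\psi_n(\cdot/t_{n,\mu})) \to \widetilde{\mathcal{I}}_\infty(\psi_n(\cdot/t_{n,0}))$ as $\mu \to 0$; and $t_{n,0} \to 1$ together with $\psi_n \to \overline{v}_\infty$ in $E(\R^3)$ pushes the right-hand side to $c_\infty$ as $n \to \infty$.

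With this upper bound the identities above imply $\{v_\lambda\}$ is bounded in $\mathcal{D}^{1,2}(\R^3)$, that $\{C(v_\lambda)\}$ is bounded, and that $\{\mu \|v_\lambda\|_2^2\}$ is bounded; hence $\{v_\lambda\}$ is bounded in $E(\R^3)$. Ruiz's Coulomb--Sobolev inequality \cite{R10} then bounds $\|v_\lambda\|_p$, and combining Nehari with the positive lower bound on $c_\lambda$ keeps $\|v_\lambda\|_p$ away from zero. A Lions-type concentration-compactness argument in $\mathcal{D}^{1,2}(\R^3)$, with non-vanishing detected through the $L^p$-norm on balls ($p<6$), supplies translations $\xi_\lambda \in \R^3$ such that $\widetilde v_\lambda(\cdot) := v_\lambda(\cdot + \xi_\lambda)$ converges, along a subsequence, weakly in $\mathcal D^{1,2}(\R^3)$, strongly in $L^q_{\mathrm{loc}}$ for $q<6$, and almost everywhere to some $\overline v \neq 0$.

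Passing to the limit in the weak formulation of the equation for $\widetilde v_\lambda$ is routine: $\mu \widetilde v_\lambda \to 0$ in $\mathcal D'$ since $\mu \to 0$ and $\widetilde v_\lambda$ is bounded in $L^2_{\mathrm{loc}}$, while the convolution term passes by Hardy--Littlewood--Sobolev combined with a.e.\ convergence. Hence $\overline v$ solves \eqref{eqssps} and $\widetilde{\mathcal{I}}_\infty(\overline v) \ge c_\infty$. Weak lower semicontinuity of $\|\nabla\cdot\|_2^2$, the energy identity, and the upper bound then force the chain
\begin{equation*}
 c_\infty \le \frac{2(p-3)}{5p-12}\|\nabla \overline v\|_2^2 \le \liminf_\lambda \frac{2(p-3)\|\nabla v_\lambda\|_2^2 + (p-2)\mu \|v_\lambda\|_2^2}{5p-12} = \liminf_\lambda c_\lambda \le c_\infty,
\end{equation*}
so every inequality saturates. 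This yields $\|\nabla v_\lambda\|_2 \to \|\nabla \overline v\|_2$ (hence strong $\mathcal D^{1,2}$-convergence) and the mass decay $\mu \|v_\lambda\|_2^2 \to 0$ (the second assertion of the theorem); the Pohozaev identity then upgrades this to $C(v_\lambda) \to C(\overline v)$, i.e.\ convergence in $E(\R^3)$, and the strong maximum principle gives $\overline v > 0$. The main obstacle is the upper-bound step: because $\overline v_\infty \in E(\R^3) \setminus H^1(\R^3)$ in general, $\overline v_\infty$ itself is not an admissible test function for $\widetilde{\mathcal{I}}_\lambda$, so standard perturbation (Lyapunov--Schmidt) arguments fail, and one must instead approximate $\overline v_\infty$ carefully in $E(\R^3)$-norm by compactly supported $H^1$ functions while tracking the scaling parameter $t_{n,\mu}$ continuously in $\mu$.
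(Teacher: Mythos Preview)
Your overall architecture---energy-level comparison via a Poho\v{z}aev-type natural constraint, followed by concentration--compactness and saturation of a chain of inequalities---is the same as the paper's. The essential difference is in the upper bound $\limsup c_\lambda\le c_\infty$. You assert that ``$\overline v_\infty\in E(\R^3)\setminus H^1(\R^3)$ in general'' and therefore build a two-parameter approximation by $\psi_n\in C_c^\infty$. This premise is false: Bellazzini--Jeanjean--Luo \cite{BJL13} prove that every ground state of \eqref{eqssps} decays exponentially, hence $v_\infty\in L^2(\R^3)\cap D^{1,2}(\R^3)=H^1(\R^3)$. The paper uses $v_\infty$ itself as an admissible test function for $\mathcal P_\lambda$, finds the unique $\overline t_\lambda>1$ with $\overline t_\lambda^{\,2}v_\infty(\overline t_\lambda\cdot)\in\mathscr P_\lambda$, and reads off directly that $\overline t_\lambda-1=O(\mu)$ and $m_\lambda\le m_\infty+C\mu$. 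Your approximation route can be made to work, but it is more laborious and loses this quantitative rate; more importantly, you have misidentified ``the main obstacle''.

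Two smaller points. First, the fibering map you write, $t\mapsto \psi_n(\cdot/t)$, does \emph{not} produce a unique maximum of $\widetilde{\mathcal I}_\lambda$ (the Coulomb term scales like $t^5$ and eventually dominates), so the projection onto the constraint is ill-defined with pure dilation; the correct scaling, used in \cite{AP08} and in the paper, is $u_t(x)=t^2u(tx)$, under which $\widetilde{\mathcal I}_\lambda(u_t)$ has a unique critical point for $p\in(3,6)$. Second, the paper identifies the limit not by passing to the limit in the weak equation but by projecting the (translated) minimizing sequence onto $\mathscr P_\infty$ and invoking the nonlocal Brezis--Lieb lemma \cite{MMS16}; your route via the weak formulation is legitimate, but ``HLS combined with a.e.\ convergence'' is not enough on its own---you need, e.g., that $\phi_{\widetilde v_\lambda}\rightharpoonup\phi_{\overline v}$ weakly in $D^{1,2}$ (Proposition~\ref{pro1203}) together with strong local convergence of $\widetilde v_\lambda\varphi$ in $L^{6/5}$. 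Your final saturation argument, which simultaneously yields $c_\lambda\to c_\infty$, strong $D^{1,2}$-convergence, and $\mu\|v_\lambda\|_2^2\to 0$, is clean and essentially equivalent to the paper's.
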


We remark that our strategy is quite different form \cite{R10}, and follows from the ideas of \cite{MM14,LM21}. In \cite{R10} the author studied the profile of the radial global minimizer for $p\in (18/7, 3)$ as $\lambda\to 0$. Since for $p\in (18/7, 3)$ the energy functional $\widetilde{\mathcal{I}}_{\infty}$ is coercive, the approach of \cite{R10} is not applicable for $p\in (3, 6)$. Note that Theorem~\ref{thm01} remains valid also for the radial ground state solutions since we can work in the radially symmetric settings step by step.

Our results do not rely and do not require the uniqueness or non-degeneracy of the ground-states of \eqref{eqssps}.  We also point out that since $E(\R^3)\subsetneq H^1(\R^3)$, it is crucial to know that the ground state solution $v_\infty$ of the formal limit equation \eqref{eqssps} has exponential decay at infinity \cite[Theorem 1.3]{BJL13}, and hence belongs to $L^2(\R^3)$. This plays an essential role in the comparison of ground state energy levels between \eqref{eqsps} and \eqref{eqssps}.

\begin{remark}
Our method can be adapted to show that for $3<p<6$ and for any sequence $\{\lambda_n\}$ with $\lambda_n\to 0$ as $n\to \infty$, there exists $\{\xi_{\lambda_n}\}\subset \R^3$ such that the translated family of ground states $u_{{\lambda_n}}(x+\xi_{\lambda_n})$ of $(P_\lambda)$ converges in $H^1(\R^3)$ to the unique positive ground state solution $u_0$ of the local equation
$$-\Delta u + u=|u|^{p-2}u\quad\text{in $\R^3$}.$$
However, since both $(P_\lambda)$ and the local limit problem are well-posed in $H^1(\R^3)$ this also can be seen via small perturbation arguments.
\end{remark}

\section{Preliminaries}

We begin by describing some properties of the space $E(\R^3)$.  The following two propositions have been proved in \cite[Propositions 2.2, 2.4]{R10} and will be used throughout the paper.
\begin{proposition}
Let us define, for any $u\in E(\R^3)$,
$$
\|u\|_E=\Big(\int_{\R^3}|\nabla u|^2dx+\Big(\int_{\R^3}(I_{2}*|u|^2)|u|^2dx\Big)^{\frac12}\Big)^{\frac12}.
$$
Then, $\|\cdot\|_E$ is a norm, and $(E(\R^3), \|\cdot\|_E)$ is a uniformly convex Banach space. Moreover, $C^{\infty}_0(\R^3)$ is
dense in $E(\R^3)$ and also $C^{\infty}_0(\R^3)$ is dense in $E_r(\R^3)$.
\end{proposition}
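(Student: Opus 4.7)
The plan is to write $\|u\|_E^2 = \|\nabla u\|_{L^2}^2 + \|u\|_C^2$, where $\|u\|_C := \bigl(\int_{\R^3}(I_{2}*|u|^2)|u|^2\,dx\bigr)^{1/4}$, and to treat the two summands separately. The gradient piece is the standard Hilbert Dirichlet norm on $D^{1,2}(\R^3)$. The Coulomb piece is analysed via the symmetric bilinear form $\langle\rho,\sigma\rangle_H := \int_{\R^3}(I_{2}*\rho)\sigma\,dx$, which is positive-definite on charge densities since $\widehat{I_{2}}(\xi)=c|\xi|^{-2}\ge 0$; so $\|\cdot\|_H := \langle\cdot,\cdot\rangle_H^{1/2}$ is a genuine Hilbert norm and $\|u\|_C = \||u|^2\|_H^{1/2}$.

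The norm axioms for $\|\cdot\|_E$ then follow from those for $\|\nabla\cdot\|_{L^2}$ and $\|\cdot\|_C$ via an $\ell^2$ Minkowski combination. Homogeneity is immediate and nondegeneracy uses $\|\nabla u\|_{L^2}=0\Rightarrow u=0$ in $D^{1,2}(\R^3)$ (Sobolev embedding into $L^6$). The nontrivial step is the triangle inequality for $\|\cdot\|_C$. I would deduce it from the Cauchy--Schwarz inequality $\|uv\|_H \le \|u^2\|_H^{1/2}\|v^2\|_H^{1/2} = \|u\|_C\|v\|_C$, which in turn follows by applying Cauchy--Schwarz for the positive measure $|x-y|^{-1}dx\,dy$ to the factorisation $u(x)v(x)u(y)v(y) = [u(x)u(y)][v(x)v(y)]$. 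The Hilbert triangle inequality on $H$ then yields
\[
\|(u+v)^2\|_H \le \|u^2\|_H + 2\|uv\|_H + \|v^2\|_H \le \bigl(\|u\|_C + \|v\|_C\bigr)^2,
\]
and taking square roots gives $\|u+v\|_C \le \|u\|_C + \|v\|_C$.

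Uniform convexity follows from the same ingredients. The parallelogram identity in $H$ applied to $\alpha := u^2+v^2$ and $\beta := 2uv$ gives $\|(u+v)^2\|_H^2 + \|(u-v)^2\|_H^2 = 2\|u^2+v^2\|_H^2 + 8\|uv\|_H^2$; bounding $\|u^2+v^2\|_H^2 \le 2(\|u\|_C^4+\|v\|_C^4)$ and $\|uv\|_H^2 \le \tfrac{1}{2}(\|u\|_C^4+\|v\|_C^4)$ (Cauchy--Schwarz and AM--GM) produces the $L^4$-type Clarkson inequality $\|u+v\|_C^4+\|u-v\|_C^4 \le 8(\|u\|_C^4+\|v\|_C^4)$, which implies uniform convexity of $\|\cdot\|_C$. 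The $\ell^2$-combination of two uniformly convex norms is uniformly convex, so $\|\cdot\|_E$ is uniformly convex. For completeness: a Cauchy sequence $\{u_n\}$ is Cauchy in $D^{1,2}(\R^3)$, furnishing a limit $u$ with $u_n\to u$ in gradient norm and, along a subsequence, a.e.; Cauchy-ness of $\{|u_n|^2\}$ in the Hilbert space $H$ combined with the a.e.\ convergence and Fatou's lemma identifies the $H$-limit as $|u|^2$, yielding $u\in E(\R^3)$ and $u_n\to u$ in $\|\cdot\|_E$.

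For density, I would combine truncation and mollification. Given $u\in E(\R^3)$, smooth cutoffs $\chi_R u$ (with $\chi_R=1$ on $B_R$, vanishing outside $B_{2R}$) converge to $u$ in $\|\cdot\|_E$: the gradient term is handled by dominated convergence together with $u\in L^6$, and the Coulomb term via the monotone bound $0\le(1-\chi_R^2)|u|^2\le |u|^2$ and dominated convergence on the product measure $|x-y|^{-1}dx\,dy$. Any such $\chi_R u$ has compact support and therefore lies in $H^1(\R^3)\cap L^{12/5}(\R^3)$ by local Sobolev embedding; standard mollification $\chi_R u * \phi_\eps \in C^\infty_0(\R^3)$ converges to $\chi_R u$ in $L^{12/5}$ and in $H^1$, and the Hardy--Littlewood--Sobolev inequality $\|\rho\|_H \le C\|\rho\|_{L^{6/5}}$ transfers this into Coulomb-norm convergence. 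For $E_r(\R^3)$ I would take a radially symmetric mollifier and a radial cutoff, so that each approximant stays radial. The main technical obstacle is the Clarkson-type inequality for $\|\cdot\|_C$: this ``square root of a Hilbert norm on $|u|^2$'' structure is not literally an $L^p$ space, but the parallelogram identity on $H$ combined with the bilinear bound $\|uv\|_H \le \|u\|_C\|v\|_C$ compensates exactly for the quartic scaling.
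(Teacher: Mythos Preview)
The paper does not supply its own proof of this proposition: immediately before the statement it says that the result ``has been proved in \cite[Propositions 2.2, 2.4]{R10}'' and simply records it for later use. There is therefore no in-paper argument to compare against.

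Your proposal is correct and is in fact the argument of Ruiz in \cite{R10}. The key structural observation---writing $\|u\|_C^2=\||u|^2\|_H$ for the Coulomb inner product $\langle\rho,\sigma\rangle_H=\int(I_2\ast\rho)\sigma$, deducing the bilinear bound $\||u||v|\|_H\le\|u\|_C\|v\|_C$ from Cauchy--Schwarz against the positive kernel $|x-y|^{-1}$, and hence obtaining both the triangle inequality and an $L^4$-type Clarkson inequality for $\|\cdot\|_C$---is exactly how uniform convexity is established there. The density argument by cut-off plus mollification, using HLS on compactly supported functions to control the Coulomb piece, is likewise the standard route. One small wrinkle in your completeness sketch: what is given directly is that $\||u_n-u_m|^2\|_H\to 0$, not that $\{|u_n|^2\}$ is Cauchy in $H$; the cleanest way to finish is to apply Fatou for the positive kernel to obtain $\||u_n-u|^2\|_H\le\liminf_m\||u_n-u_m|^2\|_H$, which is the inequality you actually need and which your mention of Fatou already points to.
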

Let us define $\phi_u=I_2*|u|^2$, then $u\in E(\R^3)$ if and only if $u$ and $\phi_u$ belong to $D^{1,2}(\R^3)$. The characterizations of the convergences in $E(\R^3)$ is given by the the following proposition.
\begin{proposition}\label{pro1203}
Given a sequence $\{u_n\}$ in $E(\R^3)$, $u_n\to u$ in $E(\R^3)$ if and only if $u_n\to u$ and $\phi_{u_n}\to\phi_{u}$ in $D^{1,2}(\R^3)$.

Moreover, $u_n\rightharpoonup u$ in $E(\R^3)$ if and only if $u_n\rightharpoonup u$ in $D^{1,2}(\R^3)$ and $\int_{\R^3}(I_2*|u|^2)|u|^2dx$ is bounded.  In such case,
$\phi_{u_n}\rightharpoonup\phi_{u}$ in $D^{1,2}(\R^3)$.
\end{proposition}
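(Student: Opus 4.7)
The plan is to prove Proposition~\ref{pro1203} by unpacking the definition of the $E$-norm. From $-\Delta\phi_u=|u|^2$ we have $\int_{\R^3}(I_2*|u|^2)|u|^2\,dx=\|\nabla\phi_u\|_2^2$, so $\|u\|_E^2=\|\nabla u\|_2^2+\|\nabla\phi_u\|_2$. The technical workhorse is the Newtonian bilinear form $B(f,g):=\int_{\R^3}(I_2*f)g\,dx=\langle\nabla(I_2*f),\nabla(I_2*g)\rangle_{L^2}$, which is symmetric and positive, together with the expansion
\[
\phi_{u_n}\;=\;\phi_u\;+\;2\,I_2*\bigl(u(u_n-u)\bigr)\;+\;\phi_{u_n-u},
\]
obtained from $|u_n|^2=|u|^2+2u(u_n-u)+(u_n-u)^2$. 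Because $\phi_u$ depends on $u$ quadratically, the cross term $I_2*(u(u_n-u))$ is unavoidable, and controlling it is the main obstacle.

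First I would handle the strong direction ($\Rightarrow$). The inequality $\|\nabla(u_n-u)\|_2\leq\|u_n-u\|_E$ yields $u_n\to u$ in $D^{1,2}(\R^3)$. For $\phi_{u_n}\to\phi_u$ in $D^{1,2}$, I would apply the pointwise Cauchy--Schwarz estimate
\[
B(fg,fg)\;\leq\;B(f^2,g^2)\;\leq\;B(f^2,f^2)^{1/2}\,B(g^2,g^2)^{1/2},
\]
valid for any positive symmetric kernel (here $|x-y|^{-1}$). With $f=u$ and $g=u_n-u$ this gives
\[
\|\nabla(I_2*(u(u_n-u)))\|_2^2\;\leq\;\|\nabla\phi_u\|_2\cdot\|\nabla\phi_{u_n-u}\|_2\;\to\;0,
\]
since $u\in E$ makes $\|\nabla\phi_u\|_2$ finite while $\|\nabla\phi_{u_n-u}\|_2\to0$ is part of $\|u_n-u\|_E\to 0$. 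Combined with the expansion, $\phi_{u_n}\to\phi_u$ in $D^{1,2}$.

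Next I would prove the weak characterization. The forward implication uses the continuous inclusion $E\hookrightarrow D^{1,2}$ and the boundedness of the $E$-norm on a weakly convergent sequence. For the converse, boundedness of $\|u_n\|_E$ combined with the reflexivity of $E$ (from the uniform convexity granted by the previous proposition) lets me extract a subsequence $u_{n_k}\rightharpoonup w$ in $E$; the forward implication forces $u_{n_k}\rightharpoonup w$ in $D^{1,2}$, so $w=u$ by uniqueness of weak limits in $D^{1,2}$, and a subsequence-uniqueness argument upgrades this to weak convergence of the whole sequence. To identify $\phi_{u_n}\rightharpoonup\phi_u$ in $D^{1,2}$, note $\phi_{u_n}$ is bounded there by hypothesis; testing $-\Delta\phi_{u_n}=|u_n|^2$ against $\varphi\in C_0^\infty(\R^3)$ and using Rellich compactness (so $u_n\to u$ in $L^2_{loc}$ and hence $|u_n|^2\to|u|^2$ in $L^1_{loc}$) identifies any weak subsequential limit with $\phi_u$ via elliptic uniqueness.

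Finally, for the strong direction ($\Leftarrow$), from $u_n\to u$ and $\phi_{u_n}\to\phi_u$ in $D^{1,2}$ I immediately get $\|u_n\|_E^2\to\|u\|_E^2$. The weak characterization just proven, applied to the $E$-bounded sequence $\{u_n\}$, yields $u_n\rightharpoonup u$ in $E$. The Radon--Riesz property in the uniformly convex space $E(\R^3)$ then delivers $u_n\to u$ in $E$. Thus the only genuinely non-routine step is taming the quadratic cross term $I_2*(u(u_n-u))$ via the pointwise Cauchy--Schwarz inequality for the Coulomb bilinear form; everything else is a combination of the expansion, uniform convexity, and standard weak compactness arguments.
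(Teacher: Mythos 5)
The paper offers no proof of this proposition at all: it is quoted from \cite{R10}*{Propositions 2.2 and 2.4}, so there is nothing internal to compare against, and your blind reconstruction is correct and essentially coincides with the argument in that reference --- the expansion $\phi_{u_n}=\phi_u+2I_2*(u(u_n-u))+\phi_{u_n-u}$ with the cross term controlled by the Cauchy--Schwarz inequality for the Coulomb bilinear form gives the forward strong implication, while reflexivity and the Radon--Riesz property of the uniformly convex norm $\|\cdot\|_E$ (granted by the preceding proposition) correctly deliver the weak characterization and the converse strong implication, for which the naive use of the expansion alone would indeed fail. The only caveats are minor: the hypothesis ``$\int_{\R^3}(I_2*|u|^2)|u|^2\,dx$ is bounded'' must be read, as you do, as uniform boundedness of $\int_{\R^3}(I_2*|u_n|^2)|u_n|^2\,dx$ along the sequence, and positivity of the kernel $|x-y|^{-1}$ justifies only the first inequality $B(fg,fg)\le B(f^2,g^2)$, whereas the second step $B(f^2,g^2)\le B(f^2,f^2)^{1/2}B(g^2,g^2)^{1/2}$ uses that the Coulomb form is positive semidefinite, which of course holds here.
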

It is also proved in \cite[Theorem 1.2]{R10} that $E(\R^3)\hookrightarrow L^q(\R^3)$ continuously for $q\in [3, 6]$, $E_r(\R^3)\hookrightarrow L^q(\R^3)$ continuously for $q\in (18/7, 6]$, and the inclusion is compact for $q\in (18/7, 6)$.  As in \cite{IR12}, we define $M:E(\R^3)\to R$ as
$$
M(u)=\int_{\R^3}|\nabla u|^2dx+\int_{\R^3}(I_{2}*|u|^2)|u|^2dx,
$$
then we can easily check that for any $u\in E(\R^3)$,
$$
\frac{1}{2}\|u\|^4_E\leq M(u)\leq \|u\|^2_E, \quad \text{if either} \ \|u\|_E\leq 1 \quad \text{or} \ M(u)\leq 1.
$$
The following estimate on $M(u)$ is given by \cite[Lemma 3.1]{IR12}.
\begin{lemma}\label{lem1201}
	Assume that $p\in (3, 6)$. 	Then there exists $C>0$ such that $\|u\|^p_p\leq CM(u)^{\frac{2p-3}{3}}$ for all $u\in E$.
\end{lemma}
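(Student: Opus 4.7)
\emph{Setup.} Write $A:=\int_{\R^3}|\nabla u|^2\,dx$ and $B:=\int_{\R^3}(I_2*|u|^2)|u|^2\,dx$, so that $M(u)=A+B$. The plan is first to bound $\|u\|_3^3$ by the Coulomb--Sobolev quantity $A^{1/2}B^{1/2}$, then interpolate the $L^p$ norm between $L^3$ and $L^6$ via H\"older, control the $L^6$ factor by the Sobolev embedding $D^{1,2}(\R^3)\hookrightarrow L^6(\R^3)$, and finally collapse the two resulting factors into $M(u)=A+B$ via the elementary inequality $A^\alpha B^\beta\le(A+B)^{\alpha+\beta}$ valid for $A,B\ge 0$, $\alpha,\beta\ge 0$. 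The exponent $(2p-3)/3$ will be forced by scaling, since $A(u_t)=tA(u)$, $B(u_t)=t^5B(u)$ and $\|u_t\|_p^p=t^3\|u\|_p^p$ for $u_t(x)=u(x/t)$.

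\emph{Step 1: the key $L^3$ bound.} I would establish
\[
\|u\|_3^3 \;\le\; A^{1/2}B^{1/2}\qquad\text{for all }u\in E(\R^3).
\]
By density of $C^\infty_0(\R^3)$ in $E(\R^3)$ (Proposition~2.1) it suffices to argue for $u\in C^\infty_0(\R^3)$. Setting $\phi_u:=I_2*|u|^2\in D^{1,2}(\R^3)$, one has $-\Delta\phi_u=|u|^2$, hence
\[
\|u\|_3^3 \;=\; \int_{\R^3}|u|(-\Delta\phi_u)\,dx \;=\; \int_{\R^3}\nabla|u|\cdot\nabla\phi_u\,dx \;\le\; \|\nabla|u|\|_2\,\|\nabla\phi_u\|_2.
\]
The integration by parts is legitimate because $\phi_u(x)=O(|x|^{-1})$ and $|\nabla\phi_u(x)|=O(|x|^{-2})$ at infinity. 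Combined with $|\nabla|u||\le|\nabla u|$ a.e.\ and the identity $\|\nabla\phi_u\|_2^2=\int\phi_u|u|^2=B$, this yields the claim.

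\emph{Steps 2 and 3: interpolation and conclusion.} For $p\in(3,6)$, writing $1/p=\theta/3+(1-\theta)/6$ with $\theta=(6-p)/p\in(0,1)$, H\"older gives $\|u\|_p^p\le\|u\|_3^{6-p}\|u\|_6^{2p-6}$. Raising Step~1 to the power $(6-p)/3$ and combining with the Sobolev bound $\|u\|_6^{2p-6}\le CA^{p-3}$ produces
\[
\|u\|_p^p \;\le\; C\,A^{(5p-12)/6}\,B^{(6-p)/6}.
\]
For $p\in(3,6)$ both exponents are strictly positive and their sum equals $(2p-3)/3$, so the elementary bound $A^\alpha B^\beta\le(A+B)^{\alpha+\beta}$ delivers $\|u\|_p^p\le C\,M(u)^{(2p-3)/3}$.

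\emph{Main obstacle.} The only non-routine step is the Coulomb--Sobolev inequality of Step~1: once the elliptic identity $-\Delta\phi_u=|u|^2$ is exploited, however, it reduces to a single integration by parts plus Cauchy--Schwarz, and the remaining steps are standard interpolation and Young's inequality. Note that the HLS route $B\le C\|u\|_{12/5}^4$ cannot be reversed and hence does not give a usable upper bound on $\|u\|_{12/5}$, which is precisely why the Green's identity argument for $\phi_u$ is needed.
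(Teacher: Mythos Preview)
Your proof is correct and is exactly the standard argument from the cited reference: the paper does not prove this lemma but refers to \cite[Lemma~3.1]{IR12}, whose proof proceeds by the same Coulomb--Sobolev bound $\|u\|_3^3\le \|\nabla u\|_2\,\|\nabla\phi_u\|_2$ (originally in \cite{L84}, restated in \cite[Lemma~2.1]{R10}) followed by $L^3$--$L^6$ interpolation, precisely as you have written. The exponent bookkeeping $(5p-12)/6+(6-p)/6=(2p-3)/3$ is correct, and both exponents are indeed positive on $(3,6)$, so the final $A^\alpha B^\beta\le(A+B)^{\alpha+\beta}$ step is valid.
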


To finish this section, we state a Poho\v zaev type identity, see \cite{R06,IR12}.
\begin{proposition}\label{pro1201}
	Assume that $p\in (2, 6)$. 	
\begin{itemize}
\item [(1)]
Let $u\in H^1(\R^3)$ be a weak solution of \eqref{eqsps}, then
$$
\frac{1}{2}\int_{\R^3}|\nabla u|^2dx+\frac{3\lambda^{\frac{p-2}{4(3-p)}} }{2}\int_{\R^3}|u|^2dx+\frac{5}{4}\int_{\R^3}(I_{2}*|u|^2)|u|^2dx-\frac{3}{p}\int_{\R^3}|u|^pdx=0
$$
\item [(2)]
Let $u\in E(\R^3)\cap H^2_{loc}(\R^3)$ be a weak solution of \eqref{eqssps}, then
$$
\frac{1}{2}\int_{\R^3}|\nabla u|^2dx+\frac{5}{4}\int_{\R^3}(I_{2}*|u|^2)|u|^2dx-\frac{3}{p}\int_{\R^3}|u|^pdx=0
$$
\end{itemize}
\end{proposition}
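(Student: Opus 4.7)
The plan is to derive both identities simultaneously via the classical Pohozaev multiplier $x\cdot\nabla u$: multiply the equation by $x\cdot\nabla u$, integrate on a ball $B_R$, and let $R\to\infty$. Since identity (2) is identity (1) with the mass coefficient $\lambda^{(p-2)/(4(3-p))}$ set to zero, the same computation handles both.

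First I would secure the regularity needed to justify the multiplier calculation. In case (1), the right-hand side of $-\Delta u = |u|^{p-2}u - \lambda^{(p-2)/(4(3-p))}u - (I_2*|u|^2)u$ belongs to $L^2_{loc}(\R^3)$ by Sobolev embedding and Hardy--Littlewood--Sobolev, so elliptic bootstrap promotes $u\in H^1(\R^3)$ to $u\in H^2_{loc}\cap C^{1,\alpha}_{loc}$; in case (2) the regularity $u\in E\cap H^2_{loc}$ is built into the hypothesis.

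The four bulk integrals over $B_R$ are then all routine except the nonlocal one. The Laplacian term produces $-\tfrac12\int_{B_R}|\nabla u|^2\,dx$ plus surface terms (from the identity $\nabla u\cdot\nabla(x\cdot\nabla u)=|\nabla u|^2+\tfrac12 x\cdot\nabla|\nabla u|^2$, a second integration by parts, and $N=3$), while the mass term and the local nonlinearity analogously yield $-\tfrac{3}{2}\lambda^{(p-2)/(4(3-p))}\int_{B_R}u^2\,dx$ and $-\tfrac{3}{p}\int_{B_R}|u|^p\,dx$ plus surface terms, via the chain-rule identities $u(x\cdot\nabla u)=\tfrac12 x\cdot\nabla u^2$ and $|u|^{p-2}u(x\cdot\nabla u)=\tfrac{1}{p}x\cdot\nabla|u|^p$. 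The nonlocal term requires one further integration by parts on the Newtonian potential $\phi_u = I_2*|u|^2$, producing the integral $\int|u|^2\, x\cdot\nabla\phi_u\,dx$; the key step is to symmetrise this double integral in the convolution variables via the swap $(x,y)\leftrightarrow(y,x)$, which replaces the kernel $x\cdot(x-y)/|x-y|^3$ by $\tfrac12|x-y|^2/|x-y|^3=(2|x-y|)^{-1}$, yielding in total $-\tfrac54\int(I_2*|u|^2)|u|^2\,dx$.

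Finally I would pass to the limit $R\to\infty$. The step I expect to need the most care is discarding the surface contributions on $\partial B_R$: each is bounded by $R\int_{\partial B_R}(|\nabla u|^2+u^2+|u|^p)\,d\sigma$, and since the relevant integrands lie in $L^1(\R^3)$ (using $u\in D^{1,2}(\R^3)$ and Lemma~\ref{lem1201} for the $L^p$ integrability in case (2), and $u\in L^2(\R^3)$ for the $u^2$ integrability in case (1)), a standard averaging argument via Fubini shows that $\liminf_{R\to\infty} R\int_{\partial B_R}(\cdot)\,d\sigma=0$, supplying a sequence $R_n\to\infty$ along which these quantities vanish. Collecting the four bulk terms and rearranging then produces exactly the two identities as stated.
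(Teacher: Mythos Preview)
The paper does not give its own proof of this proposition: it simply states the identities and refers to \cite{R06,IR12}. Your proposal is precisely the classical Poho\v{z}aev multiplier argument that underlies those references, and your computation of each bulk term---including the symmetrisation of the double integral for the Coulomb term, which produces the characteristic coefficient $5/4$---is correct.

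Two minor points. First, your list of surface integrands on $\partial B_R$ omits the term $\phi_u|u|^2$ coming from the integration by parts in the nonlocal contribution; this is harmless, since $\int_{\R^3}(I_2*|u|^2)|u|^2\,dx<\infty$ in both settings (by $u\in E(\R^3)$ in case (2) and by Hardy--Littlewood--Sobolev in case (1)), so the same averaging argument disposes of it. Second, the one-line claim that the right-hand side lies in $L^2_{loc}$ in case (1) requires a short bootstrap when $p>4$, since $|u|^{p-1}\notin L^2_{loc}$ directly from $u\in L^6$; this is routine. With these adjustments your argument is complete and matches the standard derivation.
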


\section{Asymptotic profiles of the rescaled ground state}

It is well known that in \cite{AP08} the authors has obtained the existence of ground states $u_{\lambda}$ of \eqref{eqPlam} when $p\in (3,6)$, which is a mountain pass type solution.  Then the rescaling
$$
v_{\lambda}(x)=\lambda^{\frac{1}{2(3-p)}}u_{\lambda}(\lambda^{\frac{p-2}{4(3-p)}}x)
$$
is a ground state of \eqref{eqsps} and corresponding to the rescaled minimization problem
\begin{equation}\label{eq-energy}
m_\lambda=\inf_{u\in \mathscr{P}_{\lambda}}\widetilde{\mathcal{I}}_{\lambda}(u), \quad \mathscr{P}_{\lambda}:=\{u\in H^1(\bbr^3)\setminus\{0\}: \mathcal{P}_{\lambda}(u)=0\},
\end{equation}
where $\mathcal{P}_{\lambda}:H^1(\bbr^3)\to \R$ is defined by
$$
\mathcal{P}_{\lambda}(u)=\frac{3}{2}\int_{\R^3}|\nabla u|^2dx+\frac{\lambda^{\frac{p-2}{4(3-p)}} }{2}\int_{\R^3}|u|^2dx+\frac{3}{4}\int_{\R^3}(I_{2}*|u|^2)|u|^2dx-\frac{2p-3}{p}\int_{\R^3}|u|^pdx.
$$
Observe that if $u\in H^1(\R^3)$ is a critical point of $\widetilde{\mathcal{I}}_{\lambda}$, then $u\in \mathscr{P}_{\lambda}$ since $\mathcal{P}_{\lambda}(u)=0$ is nothing but the combination of $\langle\widetilde{\mathcal{I}}'_{\lambda}(u),u\rangle=0$ and the Poho\v zaev type identity.
For each $u\in H^1(\bbr^3)\setminus\{0\}$, set
\begin{equation}\label{e-ut}
u_t(x):=t^2u(tx).
\end{equation}
Then
\begin{multline}\label{Peps-non0}
f_u(t):=\widetilde{\mathcal{I}}_{\lambda}(u_t)\\
=\frac{t^{3}}{2}\int_{\R^3}|\nabla u|^2dx+\frac{\lambda^{\frac{p-2}{4(3-p)}} t}{2}\int_{\R^3}|u|^2dx+\frac{t^3}{4}\int_{\R^3}(I_{\alpha}*|u|^2)|u|^2dx-\frac{t^{2p-3}}{p}\int_{\R^3}|u|^pdx.
\end{multline}
Clearly, there exists a unique $t_u>0$ such that $f_u(t_u)=\max\{f_u(t): t>0\}$ and $f'_u(t_u)t_u=0$, which means that $t_u^2u(t_ux)\in \mathscr{P}_{\lambda}$.
Therefore $\mathscr{P}_{\lambda}\neq \emptyset$.  Clearly, $v_{\lambda}\in \mathscr{P}_{\lambda}$ and $\widetilde{\mathcal{I}}_{\lambda}(v_{\lambda})=m_{\lambda}>0$.

While for the equation \eqref{eqssps}, we formally define
\begin{equation}\label{eq-energy-ssps}
m_\infty=\inf_{u\in \mathscr{P}_{\infty}}\widetilde{\mathcal{I}}_{\infty}(u), \quad \mathscr{P}_{\infty}:=\{u\in E(\bbr^3)\setminus\{0\}: \mathcal{P}_{\infty}(u)=0\},
\end{equation}
where $\mathcal{P}_{\infty}:E(\bbr^3)\to \R$ is given by
$$
\mathcal{P}_{\infty}(u)=\frac{3}{2}\int_{\R^3}|\nabla u|^2dx+\frac{3}{4}\int_{\R^3}(I_{2}*|u|^2)|u|^2dx-\frac{2p-3}{p}\int_{\R^3}|u|^pdx.
$$
Similarly we can show that $\mathscr{P}_{\infty}\neq \emptyset$ and $\mathcal{P}_{\infty}$ has the same properties as $\mathcal{P}_{\lambda}$.

In this section, we are going to show that $v_\lambda$ converges to a positive ground-state of the formal limit equation \eqref{eqssps}.
Similar to the proof of \cite[Corollary 3.2]{IR12}, by using Lemma~\ref{lem1201} we obtain a lower bound on $M(u)$ for all $u\in \mathscr{P}_{\infty}$.
\begin{corollary}\label{cor-1201}
	Let $p\in (3, 6)$. 	There exists $\eta>0$ such that $M(u)>\eta$ for any $u\in \mathscr{P}_{\infty}$.
\end{corollary}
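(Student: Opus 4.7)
The plan is to exploit the Pohožaev-type constraint $\mathcal{P}_\infty(u)=0$ together with the interpolation bound of Lemma~\ref{lem1201}, combined in such a way that the quantity $M(u)$ is forced to satisfy an inequality of the form $M(u)\le C\,M(u)^{(2p-3)/3}$, from which a positive lower bound follows because the exponent $(2p-3)/3$ exceeds $1$ when $p>3$.

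First I would rewrite the constraint $\mathcal{P}_\infty(u)=0$ as
\begin{equation*}
\frac{2p-3}{p}\int_{\R^3}|u|^p\,dx = \frac{3}{2}\int_{\R^3}|\nabla u|^2\,dx + \frac{3}{4}\int_{\R^3}(I_2*|u|^2)|u|^2\,dx.
\end{equation*}
The right-hand side is bounded below by $\tfrac{3}{4}M(u)$ (taking the smaller of the two coefficients $3/2$ and $3/4$ against the definition of $M$), so
\begin{equation*}
\|u\|_p^p \ge \frac{3p}{4(2p-3)}\,M(u).
\end{equation*}

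Next I would feed in Lemma~\ref{lem1201}, which supplies the upper control $\|u\|_p^p \le C\,M(u)^{(2p-3)/3}$ valid for every $u\in E(\R^3)$. Chaining these two inequalities yields
\begin{equation*}
\frac{3p}{4(2p-3)}\,M(u)\;\le\; C\,M(u)^{(2p-3)/3}.
\end{equation*}
Since $u\not\equiv 0$ on $\mathscr{P}_\infty$ forces $M(u)>0$, dividing through and using that $(2p-3)/3-1=(2p-6)/3>0$ for $p>3$, we obtain
\begin{equation*}
M(u)^{(2p-6)/3}\ge \frac{3p}{4C(2p-3)},
\end{equation*}
which gives an explicit positive lower bound $\eta:=\bigl(\tfrac{3p}{4C(2p-3)}\bigr)^{3/(2p-6)}$ on $M(u)$ independent of $u\in\mathscr{P}_\infty$.

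There is no real obstacle here beyond checking that the constants combine correctly; the argument parallels the proof of \cite[Corollary~3.2]{IR12} cited in the text. The structural point to emphasize is that the restriction $p>3$ is exactly what produces the favorable exponent gap $(2p-3)/3>1$ needed to close the estimate, mirroring the standard Nehari/Pohožaev lower bound heuristic in superlinear problems.
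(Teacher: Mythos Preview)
Your proof is correct and follows exactly the approach the paper indicates: the paper does not spell out the argument but merely refers to \cite[Corollary~3.2]{IR12} together with Lemma~\ref{lem1201}, which is precisely the combination you carry out. Your write-up is in fact more detailed than the paper's one-line reference.
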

With Proposition~\ref{pro1201}, we could prove that $\mathscr{P}_{\infty}$ is a natural constraint in the spirit of \citelist{\cite{AP08}\cite{R06}} and the proof will be skiped.
\begin{lemma}\label{lem1202}
	Assume that $p\in (3, 6)$. 	Then $m_{\infty}>0$ and $\mathscr{P}_{\infty}$ is a natural constraint.
\end{lemma}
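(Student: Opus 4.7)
Both assertions rest on exploiting the fiber $u_t(x)=t^2u(tx)$ already used in \eqref{e-ut}, together with the lower bound on $M$ from Corollary~\ref{cor-1201}.

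I would first prove $m_\infty>0$. For $u\in\mathscr{P}_\infty$ the constraint $\mathcal{P}_\infty(u)=0$ allows me to solve for the Lebesgue term,
\[
\frac{1}{p}\|u\|_p^p = \frac{1}{2p-3}\Bigl(\frac{3}{2}\int_{\R^3}|\nabla u|^2\,dx+\frac{3}{4}\int_{\R^3}(I_2*|u|^2)|u|^2\,dx\Bigr),
\]
and substitute into $\widetilde{\mathcal{I}}_\infty(u)$ to obtain
\[
\widetilde{\mathcal{I}}_\infty(u) = \frac{p-3}{2p-3}\int_{\R^3}|\nabla u|^2\,dx + \frac{p-3}{2(2p-3)}\int_{\R^3}(I_2*|u|^2)|u|^2\,dx \ge \frac{p-3}{2(2p-3)}\,M(u).
\]
Both coefficients are strictly positive since $p>3$, and Corollary~\ref{cor-1201} yields the uniform bound $M(u)>\eta$ on $\mathscr{P}_\infty$, hence $m_\infty\ge\frac{(p-3)\eta}{2(2p-3)}>0$.

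For the natural-constraint statement, the scaling identities $\|\nabla u_t\|_2^2=t^3\|\nabla u\|_2^2$, $\int(I_2*|u_t|^2)|u_t|^2\,dx=t^3\int(I_2*|u|^2)|u|^2\,dx$, $\|u_t\|_p^p=t^{2p-3}\|u\|_p^p$ show that $g_u(t):=\widetilde{\mathcal{I}}_\infty(u_t)$ has the shape $\alpha t^3-\beta t^{2p-3}$ with $\alpha,\beta>0$ for every $u\in E(\R^3)\setminus\{0\}$ (using $E\hookrightarrow L^p$), and yield the identity $\mathcal{P}_\infty(u_t)=tg_u'(t)$. Because $2p-3>3$ for $p>3$, the function $g_u$ attains a unique strict maximum at some $t_u>0$, which at the same time shows $\mathscr{P}_\infty\ne\emptyset$. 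Now let $\bar u\in\mathscr{P}_\infty$ be any critical point of $\widetilde{\mathcal{I}}_\infty\big|_{\mathscr{P}_\infty}$. Differentiating the explicit formula
\[
\mathcal{P}_\infty(\bar u_t) = \frac{3t^3}{2}\|\nabla\bar u\|_2^2+\frac{3t^3}{4}\int_{\R^3}(I_2*|\bar u|^2)|\bar u|^2\,dx-\frac{(2p-3)t^{2p-3}}{p}\|\bar u\|_p^p
\]
at $t=1$ and using $\mathcal{P}_\infty(\bar u)=0$ to eliminate $\|\bar u\|_p^p$ gives the crucial sign
\[
\frac{d}{dt}\bigg|_{t=1}\mathcal{P}_\infty(\bar u_t) = (6-2p)\Bigl(\frac{3}{2}\|\nabla\bar u\|_2^2+\frac{3}{4}\int_{\R^3}(I_2*|\bar u|^2)|\bar u|^2\,dx\Bigr) < 0
\]
(since $p>3$ and $\bar u\ne 0$), while $g_{\bar u}'(1)=\mathcal{P}_\infty(\bar u)=0$.

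These two identities are exactly what is needed to conclude $\widetilde{\mathcal{I}}_\infty'(\bar u)=0$. Formally the Lagrange multiplier rule gives $\mu\in\R$ with $\widetilde{\mathcal{I}}_\infty'(\bar u)=\mu\,\mathcal{P}_\infty'(\bar u)$, and testing along the fiber forces $\mu=0$. The only delicate point I anticipate is that the fiber tangent $2\bar u+x\cdot\nabla\bar u$ is a priori merely a distribution rather than an element of $E(\R^3)$, so one cannot literally pair it against the Lagrange identity. I would bypass this by the standard deformation trick of \cite{R06,AP08}: for each $\varphi\in E(\R^3)$, the implicit function theorem (applicable precisely because $\tfrac{d}{dt}\mathcal{P}_\infty(\bar u_t)|_{t=1}\ne 0$) yields a smooth $s\mapsto t(s)$ with $t(0)=1$ and $(\bar u+s\varphi)_{t(s)}\in\mathscr{P}_\infty$ for small $s$; differentiating the minimality inequality $\widetilde{\mathcal{I}}_\infty((\bar u+s\varphi)_{t(s)})\ge m_\infty$ at $s=0$ and using $g_{\bar u}'(1)=0$ then yields $\langle\widetilde{\mathcal{I}}_\infty'(\bar u),\varphi\rangle=0$ for every $\varphi\in E(\R^3)$, completing the argument without ever invoking the distributional tangent.
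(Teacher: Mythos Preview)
Your proof is correct and matches the approach the paper explicitly defers to \cite{AP08,R06} (the paper itself skips this proof). The computation of $m_\infty>0$ via $\widetilde{\mathcal{I}}_\infty(u)=\frac{p-3}{2p-3}\|\nabla u\|_2^2+\frac{p-3}{2(2p-3)}\int(I_2*|u|^2)|u|^2\,dx\ge\frac{p-3}{2(2p-3)}M(u)$ together with Corollary~\ref{cor-1201} is exactly right, and your fiber/IFT deformation argument is the standard route in those references.

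One small slip: you begin the second part with ``let $\bar u$ be \emph{any} constrained critical point'' but then invoke ``the minimality inequality $\widetilde{\mathcal{I}}_\infty((\bar u+s\varphi)_{t(s)})\ge m_\infty$''. The latter only holds for minimizers. This is not a real gap for two reasons. First, the paper only ever uses Lemma~\ref{lem1202} for a minimizer $\overline v_\infty$ (at the end of the proof of Theorem~\ref{thm01}), so the minimizer case suffices. Second, your argument extends to arbitrary constrained critical points anyway: the composed map $s\mapsto\widetilde{\mathcal{I}}_\infty((\bar u+s\varphi)_{t(s)})=g_{\bar u+s\varphi}(t(s))$ is a smooth real-valued function of $s$ via the explicit polynomial form $g_v(t)=\alpha(v)t^3-\beta(v)t^{2p-3}$, so the constrained criticality of $\bar u$ forces its $s$-derivative at $0$ to vanish without appealing to any inequality or to the distributional tangent $2\bar u+x\cdot\nabla\bar u$. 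Either way, just replace ``minimality inequality'' by ``stationarity of $s\mapsto\widetilde{\mathcal{I}}_\infty((\bar u+s\varphi)_{t(s)})$ at $s=0$'' and the argument is complete.
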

\begin{remark}\label{rem-ssps}
\cite[Theorem 1.1]{IR12} shows that \eqref{eqssps} admits a ground state $v_{\infty}\in \mathscr{P}_{\infty}$ with $\widetilde{\mathcal{I}}_{\infty}(v_{\infty})=m_{\infty}$ when $p\in (3,6)$.  Moreover, each solution of \eqref{eqssps} has an exponential decay at infinity if $p\in (3, 6)$ \cite[Theorem 1.3]{BJL13}, which means that they belong to $L^2(\R^3)$.
\end{remark}

We now turn our attention to study the asymptotic profile of the rescaled ground state $v_{\lambda}$.  As we mentioned in the introduction, we use direct variational analysis based on the comparison of the ground state energy levels for two problems, we begin by studying the convergence of $m_{\lambda}$ as $\lambda\to \infty$.

\begin{lemma}\label{lem1101}
	Assume that $p\in (3, 6)$. 	Then $0<m_{\lambda}-m_\infty\to 0$ as $\lambda\to \infty$.
\end{lemma}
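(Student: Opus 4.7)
The plan is to compare $m_\lambda$ and $m_\infty$ by moving ground states back and forth between the Pohozaev manifolds $\mathscr{P}_\lambda$ and $\mathscr{P}_\infty$ via the scaling $u_t(x)=t^2u(tx)$ introduced in \eqref{e-ut}, exploiting the fact that the coefficient $\lambda^{\frac{p-2}{4(3-p)}}$ tends to $0$ as $\lambda\to\infty$ (since $p\in(3,6)$). For each fixed $u\in H^1(\R^3)\setminus\{0\}$, the reduced function $f_u(t)=\widetilde{\mathcal{I}}_\lambda(u_t)$ from \eqref{Peps-non0} satisfies $\mathcal{P}_\lambda(u_t)=t\,f_u'(t)$; since $2p-3>3$, $f_u$ has a unique maximum $t_u>0$, and $u_{t_u}\in\mathscr{P}_\lambda$. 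The same holds for the map $g_u(t):=\widetilde{\mathcal{I}}_\infty(u_t)$ on $E(\R^3)$ and the manifold $\mathscr{P}_\infty$. I will use these two monotonicity pictures as the main technical tool.

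For the strict lower bound $m_\lambda>m_\infty$, I take any $u\in\mathscr{P}_\lambda$ (in particular $u=v_\lambda$). The constraint $\mathcal{P}_\lambda(u)=0$ together with $\lambda^{\frac{p-2}{4(3-p)}}\|u\|_2^2>0$ gives $\mathcal{P}_\infty(u)=g_u'(1)=-\tfrac12\lambda^{\frac{p-2}{4(3-p)}}\|u\|_2^2<0$, so the unique maximum $s_u$ of $g_u$ lies in $(0,1)$, and hence $u_{s_u}\in\mathscr{P}_\infty$. Then
\[
\widetilde{\mathcal{I}}_\lambda(u)=\max_{t>0}f_u(t)\ge f_u(s_u)=g_u(s_u)+\tfrac{\lambda^{\frac{p-2}{4(3-p)}} s_u}{2}\|u\|_2^2\ge m_\infty+\tfrac{\lambda^{\frac{p-2}{4(3-p)}} s_u}{2}\|u\|_2^2>m_\infty,
\]
yielding $m_\lambda>m_\infty$.

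For the reverse inequality $\limsup_{\lambda\to\infty}m_\lambda\le m_\infty$, I take the ground state $v_\infty\in\mathscr{P}_\infty$ from Remark~\ref{rem-ssps}. Crucially, by \cite[Theorem 1.3]{BJL13} cited in the introduction, $v_\infty$ decays exponentially, so $v_\infty\in L^2(\R^3)\subset H^1(\R^3)$, which makes $f_{v_\infty}$ well-defined on $H^1(\R^3)$. Since $g_{v_\infty}'(1)=\mathcal{P}_\infty(v_\infty)=0$ and $f_{v_\infty}'(1)=\tfrac12\lambda^{\frac{p-2}{4(3-p)}}\|v_\infty\|_2^2>0$, the unique maximum $t_\lambda$ of $f_{v_\infty}$ satisfies $t_\lambda>1$; an explicit inspection of $f_{v_\infty}'(t)=0$ (or a compactness argument applied to the monotone shift of the critical point) shows $t_\lambda\to 1$ as $\lambda^{\frac{p-2}{4(3-p)}}\to 0$. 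Therefore
\[
m_\lambda\le\widetilde{\mathcal{I}}_\lambda((v_\infty)_{t_\lambda})=f_{v_\infty}(t_\lambda)=g_{v_\infty}(t_\lambda)+\tfrac{\lambda^{\frac{p-2}{4(3-p)}} t_\lambda}{2}\|v_\infty\|_2^2\ \longrightarrow\ g_{v_\infty}(1)=m_\infty.
\]
Combining the two estimates gives $0<m_\lambda-m_\infty\to 0$.

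The main obstacle is the differing ambient spaces: $\widetilde{\mathcal{I}}_\infty$ lives naturally on $E(\R^3)$, while $\widetilde{\mathcal{I}}_\lambda$ requires $L^2$-integrability, so without the $L^2$ membership of $v_\infty$ (guaranteed by exponential decay) we could not even evaluate $f_{v_\infty}(t_\lambda)$. Besides this, one needs to justify $t_\lambda\to 1$; this is routine from the implicit-function picture or by showing boundedness of $t_\lambda$ and using the uniqueness of the maximum of $g_{v_\infty}$. Everything else reduces to the scaling calculation already present in \eqref{Peps-non0}.
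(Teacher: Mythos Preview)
Your proof is correct and follows essentially the same route as the paper: both directions are obtained by projecting ground states onto the other Poho\v zaev manifold via the scaling $u_t(x)=t^2u(tx)$, using $v_\lambda$ to get $m_\infty<m_\lambda$ and $v_\infty\in L^2(\R^3)$ (from exponential decay) to get $\limsup m_\lambda\le m_\infty$. The only notable difference is that the paper computes the reduced energy on the manifolds and extracts the explicit rate $m_\lambda-m_\infty\le C\lambda^{\frac{p-2}{4(3-p)}}$, whereas you argue more succinctly via $\widetilde{\mathcal I}_\lambda(u)=\max_t f_u(t)\ge f_u(s_u)$ and continuity of $g_{v_\infty}$; both are valid and your justification of $t_\lambda\to 1$ (uniqueness of the maximum of $g_{v_\infty}$ plus $f_{v_\infty}'-g_{v_\infty}'\to 0$) is adequate.
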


\begin{proof}
	First, we use $v_\lambda$ with $\lambda>0$ as a test function for $\P_\infty$. We obtain
    $$\mathcal{P}_\infty(v_{\lambda})=\mathcal{P}_{\lambda}(v_{\lambda})-\frac{\lambda^{\frac{p-2}{4(3-p)}}}{2}\|v_{\lambda}\|_2^2=-\frac{\lambda^{\frac{p-2}{4(3-p)}}}{2}\|v_{\lambda}\|_2^2<0.$$
	Hence there exists a unique $t_{\lambda}\in (0, 1)$ such that $t^2_{\lambda}v_{\lambda}(t_{\lambda}x)\in \mathscr{P}_\infty$, and we have
	\begin{equation}\label{eq1102}
	m_\infty\leq \widetilde{\mathcal{I}}_{\infty}(t^2_{\lambda}v_{\lambda}(t_{\lambda}x))=\frac{t_{\lambda}^{3}(p-3)}{2p-3}\|\nabla v_\lambda\|_2^2+\frac{(p-3) t_{\lambda}^{3}}{2(2p-3)}\int_{\R^3}(I_{2}*|v_{\lambda}|^2)|v_{\lambda}|^2dx< \widetilde{\I}_{\lambda}(v_{\lambda}(x))=m_{\lambda},
	\end{equation}
	which means $m_\infty<m_{\lambda}$.
	
	To show that $m_{\lambda}\to m_\infty$ as $\lambda\to \infty$ we shall use $v_\infty$ as a test function for $\P_\lambda$. Note that the ground state $v_\infty$ of \eqref{eqssps} has an exponential decay at infinity, hence $v_\infty\in L^2(\R^3)$ (see Remark~\ref{rem-ssps}).
	
	Since $\mathcal{P}_{\lambda}(v_\infty)=\frac{\lambda^{\frac{p-2}{4(3-p)}}}{2}\|v_\infty\|_2^2>0$, there exists $\overline{t}_{\lambda}>1$ such that $\overline{t}^2_{\lambda}v_{\lambda}(\overline{t}_{\lambda}x)\in \mathscr{P}_{\lambda}$, i.e.,
	$$
	\frac{3\overline{t}_{\lambda}^{3}}{2}\|\nabla v_\infty\|_2^2+\frac{\lambda^{\frac{p-2}{4(3-p)}}\overline{t}_{\lambda}}{2}\|v_\infty\|^2_2+\frac{3 \overline{t}_{\eps}^{3}}{4}\int_{\R^3}(I_{\alpha}*|v_\infty|^2)|v_\infty|^2 dx=\frac{(2p-3)\overline{t}_{\lambda}^{2p-3}}{p}\|v_\infty\|_p^p.
	$$
	This, combined with $\mathcal{P}_{\infty}(v_\infty)=0$ and $v_\infty\in L^2(\R^N)$, implies that
	$$
	\frac{(2p-3)(\overline{t}_{\lambda}^{2p-6}-1)}{p}\|v_\infty\|_p^p=\frac{\lambda^{\frac{p-2}{4(3-p)}}\overline{t}^{-2}_{\lambda}}{2}\|v_\infty\|^2_2.
	$$
	Therefore, $\overline{t}_{\lambda}\to 1$ as $\lambda\to \infty$.  Moreover,
	$$
	\overline{t}_{\lambda}\leq 1+C\lambda^{\frac{p-2}{4(3-p)}},
	$$
	where $C>0$ is independent of $\lambda$.
	Thus we have
	$$\aligned
	m_{\lambda}\leq\widetilde{\mathcal{I}}_{\lambda}(t^2_{\lambda}v_{\lambda}(t_{\lambda}x))\leq&\widetilde{\mathcal{I}}_{\infty}(v_\infty)
	+C(\overline{t}_{\lambda}^{3}-1)+\frac{\lambda^{\frac{p-2}{4(3-p)}}\overline{t}_{\lambda}}{2}\|v_\infty\|^2_2\\
	\leq &m_\infty+C\lambda^{\frac{p-2}{4(3-p)}}.
	\endaligned
	$$
	This, together with \eqref{eq1102}, means that $m_{\lambda}-m_\infty\to 0$ as $\lambda\to \infty$.
	\end{proof}

\begin{corollary}\label{cor-bound0}
	Let $p\in (3, 6)$. 	Then the quantities
	$$\|\nabla v_{\lambda}\|_2^2,\quad \lambda^{\frac{p-2}{4(3-p)}}\|v_{\lambda}\|_2^2,\quad \|v_{\lambda}\|_q^q,
	\quad \int_{\R^3}(I_{2}*|v_{\lambda}|^2)|v_{\lambda}|^2 dx,$$
	are uniformly bounded as $\lambda\to \infty$.
\end{corollary}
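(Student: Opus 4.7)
The plan is to exploit the Pohožaev constraint $\mathcal{P}_\lambda(v_\lambda)=0$ to rewrite the ground state energy $m_\lambda=\widetilde{\mathcal{I}}_\lambda(v_\lambda)$ as a positive linear combination of precisely the four quantities we want to bound. Since Lemma~\ref{lem1101} already provides a uniform upper bound $m_\lambda\le m_\infty+o(1)$ as $\lambda\to\infty$, the boundedness then falls out term by term.

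Concretely, I would solve the identity $\mathcal{P}_\lambda(v_\lambda)=0$ for $\frac{1}{p}\|v_\lambda\|_p^p$ and substitute into $\widetilde{\mathcal{I}}_\lambda(v_\lambda)=m_\lambda$. A short computation gives
\begin{equation*}
m_\lambda=\frac{p-3}{2p-3}\,\|\nabla v_\lambda\|_2^2+\frac{p-2}{2p-3}\,\lambda^{\frac{p-2}{4(3-p)}}\|v_\lambda\|_2^2+\frac{p-3}{2(2p-3)}\int_{\R^3}(I_2*|v_\lambda|^2)|v_\lambda|^2\,dx.
\end{equation*}
The hypothesis $p\in(3,6)$ is used precisely here: all three coefficients are strictly positive. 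Combined with Lemma~\ref{lem1101}, this immediately gives uniform bounds on $\|\nabla v_\lambda\|_2^2$, on $\lambda^{(p-2)/(4(3-p))}\|v_\lambda\|_2^2$, and on $\int(I_2*|v_\lambda|^2)|v_\lambda|^2\,dx$.

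The remaining bound on $\|v_\lambda\|_q^q$ then follows without further work: reading $\mathcal{P}_\lambda(v_\lambda)=0$ as an expression for $\|v_\lambda\|_p^p$ exhibits it as a bounded linear combination of the three quantities just controlled, so $\|v_\lambda\|_p^p$ is uniformly bounded; for general $q\in[3,6]$, the continuous embedding $E(\R^3)\hookrightarrow L^q(\R^3)$ recalled in Section~2 together with the bound $M(v_\lambda)\le\|v_\lambda\|_E^2$-type control gives the uniform bound on $\|v_\lambda\|_q$.

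There is essentially no analytic obstacle here; the argument is purely algebraic once Lemma~\ref{lem1101} is available. The only point of care is checking the signs of the three coefficients in the rewritten expression for $m_\lambda$, which is exactly where the assumption $p>3$ enters (the coefficient $\frac{p-3}{2p-3}$ in front of $\|\nabla v_\lambda\|_2^2$ would vanish at $p=3$ and flip sign below it, so this scheme is tied to the supercritical range).
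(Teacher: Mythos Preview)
Your proposal is correct and follows exactly the paper's approach: combine the constraint $\mathcal{P}_\lambda(v_\lambda)=0$ with $\widetilde{\mathcal{I}}_\lambda(v_\lambda)=m_\lambda$ to obtain the identity
\[
m_\lambda=\frac{p-3}{2p-3}\|\nabla v_\lambda\|_2^2+\frac{p-2}{2p-3}\lambda^{\frac{p-2}{4(3-p)}}\|v_\lambda\|_2^2+\frac{p-3}{2(2p-3)}\int_{\R^3}(I_2*|v_\lambda|^2)|v_\lambda|^2\,dx,
\]
and then invoke Lemma~\ref{lem1101}. Your discussion of the $L^q$ bound is in fact slightly more explicit than the paper's, which simply asserts it.
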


\begin{proof}
	From $v_\lambda\in \mathscr{P}_\lambda$ and Lemma \ref{lem1101} we have
	\begin{multline*}
	m_\infty+o(1)=m_{\lambda}=\widetilde{\mathcal{I}}_{\lambda}(v_\lambda(x))\\
	=\frac{p-3}{2p-3}\|\nabla v_\lambda\|_2^2+\frac{p-2}{2p-3}\lambda^{\frac{p-2}{4(3-p)}}\| v_\lambda\|_2^2+\frac{p-3}{2(2p-3)}\int_{\R^3}(I_{2}*|v_\lambda|^2)|v_\lambda|^2dx.
	\end{multline*}
Therefore, $$\|\nabla v_{\lambda}\|_2^2,\quad \lambda^{\frac{p-2}{4(3-p)}}\|v_{\lambda}\|_2^2,\quad \|v_{\lambda}\|_q^q,
	\quad \int_{\R^3}(I_{2}*|v_{\lambda}|^2)|v_{\lambda}|^2 dx,$$
	are uniformly bounded as $\lambda\to \infty$.
	\end{proof}

\begin{lemma}\label{lem1102}
	Let $p\in (3, 6)$.	Then $\lambda^{\frac{p-2}{4(3-p)}}\|v_{\lambda}\|_2^2\to 0$ as $\lambda\to \infty$.
\end{lemma}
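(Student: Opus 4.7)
The plan is to refine the comparison already made in the proof of Lemma~\ref{lem1101} to obtain a quantitative estimate of $\alpha_\lambda := \lambda^{\frac{p-2}{4(3-p)}}\|v_\lambda\|_2^2$ in terms of the gap $m_\lambda - m_\infty$, which tends to $0$ by Lemma~\ref{lem1101}. The key observation is that the inequality $m_\infty \leq \widetilde{\mathcal{I}}_\infty(t_\lambda^2 v_\lambda(t_\lambda\cdot))$ together with the identity from Corollary~\ref{cor-bound0} already isolate the term we want to control.

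First I would recall that, exactly as in the proof of Lemma~\ref{lem1101}, the negativity $\mathcal{P}_\infty(v_\lambda)=-\tfrac12\alpha_\lambda<0$ yields a unique $t_\lambda\in(0,1)$ with $u_\lambda(x):=t_\lambda^2 v_\lambda(t_\lambda x)\in\mathscr{P}_\infty$. Next, using the constraint $\mathcal{P}_\infty(u_\lambda)=0$ to eliminate the $L^p$ term, a short computation gives
\begin{equation*}
\widetilde{\mathcal{I}}_\infty(u_\lambda)=t_\lambda^{3}\left[\frac{p-3}{2p-3}\|\nabla v_\lambda\|_2^2+\frac{p-3}{2(2p-3)}\int_{\R^3}(I_2*|v_\lambda|^2)|v_\lambda|^2\,dx\right].
\end{equation*}
On the other hand, from $v_\lambda\in\mathscr{P}_\lambda$ the energy decomposition already derived in Corollary~\ref{cor-bound0} reads
\begin{equation*}
m_\lambda=\frac{p-3}{2p-3}\|\nabla v_\lambda\|_2^2+\frac{p-2}{2p-3}\alpha_\lambda+\frac{p-3}{2(2p-3)}\int_{\R^3}(I_2*|v_\lambda|^2)|v_\lambda|^2\,dx,
\end{equation*}
so the bracket in the preceding display is exactly $m_\lambda-\tfrac{p-2}{2p-3}\alpha_\lambda$, which is nonnegative.

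Combining these with $m_\infty\leq\widetilde{\mathcal{I}}_\infty(u_\lambda)$ and the crucial bound $t_\lambda^{3}\leq 1$, I obtain
\begin{equation*}
m_\infty\leq t_\lambda^{3}\Bigl[m_\lambda-\frac{p-2}{2p-3}\alpha_\lambda\Bigr]\leq m_\lambda-\frac{p-2}{2p-3}\alpha_\lambda,
\end{equation*}
which rearranges to
\begin{equation*}
0\leq\alpha_\lambda\leq\frac{2p-3}{p-2}\bigl(m_\lambda-m_\infty\bigr).
\end{equation*}
Since $p\in(3,6)$ makes the constant positive, and $m_\lambda-m_\infty\to 0$ as $\lambda\to\infty$ by Lemma~\ref{lem1101}, the squeeze forces $\alpha_\lambda\to 0$.

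There is essentially no obstacle here: the whole proof is a one-line trick once one writes $\widetilde{\mathcal{I}}_\infty(u_\lambda)$ in terms of $m_\lambda$. The only point that must be handled carefully is the nonnegativity of the bracket $m_\lambda-\tfrac{p-2}{2p-3}\alpha_\lambda$ (which is needed to deduce $t_\lambda^3\cdot(\cdot)\leq(\cdot)$ from $t_\lambda<1$), but this is immediate from the decomposition above since both remaining summands are manifestly nonnegative. No compactness, subsequence argument, or contradiction is required.
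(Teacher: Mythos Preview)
Your proof is correct and is in fact cleaner than the paper's. Both arguments start from the same projection $t_\lambda\in(0,1)$ with $t_\lambda^2 v_\lambda(t_\lambda\cdot)\in\mathscr{P}_\infty$, but they diverge from there. The paper first argues by contradiction that $t_\lambda\to 1$ (using \eqref{teto0} and $m_\lambda\to m_\infty$), and then expands the constraint $\mathcal{P}_\infty(t_\lambda^2 v_\lambda(t_\lambda\cdot))=0$, subtracts $\mathcal{P}_\lambda(v_\lambda)=0$, and lets the $(t_\lambda^k-1)$ terms vanish by Corollary~\ref{cor-bound0}. You instead combine $m_\infty\le\widetilde{\mathcal{I}}_\infty(t_\lambda^2 v_\lambda(t_\lambda\cdot))$ directly with the energy decomposition of $m_\lambda$ from Corollary~\ref{cor-bound0}, exploiting only $t_\lambda\le 1$, and obtain the explicit quantitative bound $\alpha_\lambda\le\frac{2p-3}{p-2}(m_\lambda-m_\infty)$. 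Your route is shorter, avoids the contradiction step, and yields a rate; the paper's route has the side benefit of establishing $t_\lambda\to 1$, which is invoked again in the proof of Theorem~\ref{thm01} (though this also follows from your inequality via $m_\infty\le t_\lambda^3 m_\lambda$).
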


\begin{proof}
	Lemma~\ref{lem1101} implies that there exists a unique $t_{\lambda}\in (0, 1)$ such that $t_{\lambda}^2v_{\lambda}(t_{\lambda}x)\in \mathscr{P}_\infty$.  Indeed, assume that $t_{\lambda}\to t_\infty<1$ as $\lambda\to \infty$. Then by \eqref{eq1102} we have, as $\lambda\to \infty$,
	\begin{equation}\label{teto0}\aligned
	m_\infty &\leq \widetilde{\mathcal{I}}_{\infty}(t^2_{\lambda}v_{\lambda}(t_{\lambda}x))=\frac{t_{\lambda}^{3}(p-3)}{2p-3}\|\nabla v_\lambda\|_2^2+\frac{t_{\lambda}^{3}(p-3)}{2(2p-3)}\int_{\R^3}(I_{2}*|v_{\lambda}|^2)|v_{\lambda}|^2dx\\
    &< t_{\lambda}^{3}\widetilde{\I}_{\lambda}(v_{\lambda}(x))=t_{\lambda}^{3}m_{\lambda}
	\to t_\infty^{3} m_\infty<m_\infty,
    \endaligned	
    \end{equation}
	which is a contradiction.  Therefore $t_{\lambda}\to 1$ as $\lambda\to \infty$. Using $\mathcal{P}_\infty(t_{\lambda}^2v_{\lambda}(t_{\lambda}x))=0$ again, we see that
	$$\aligned
	0=&\frac{3t_{\lambda}^{3}}{2}\|\nabla v_{\lambda}\|_2^2+
	\frac{3t_{\lambda}^3}{4}\int_{\R^3}(I_{2}*|v_{\lambda}|^2)|v_{\lambda}|^2dx-\frac{(2p-3)t^{2p-3}_{\lambda}}{p}\|v_{\lambda}\|_p^p\\
	=&\mathcal{P}_{\lambda}(v_{\lambda})-\frac{\lambda^{\frac{p-2}{4(3-p)}} t_{\lambda}}{2}\|v_{\lambda}\|_2^2+\frac{3(t^{3}_{\lambda}-1)}{2}\|\nabla v_{\lambda}\|_2^2+\frac{3(t^{3}_{\lambda}-1)}{4}\int_{\R^3}(I_{2}*|v_{\lambda}|^2)|v_{\lambda}|^2dx\\
	&-\frac{(2p-3)(t^{2p-3}_{\lambda}-1)}{p}\|v_{\lambda}\|_p^p.
	\endaligned
	$$
	This, together with Corollary~\ref{cor-bound0} and $\mathcal{P}_{\lambda}(v_{\lambda})=0$, implies that $\lambda^{\frac{p-2}{4(3-p)}}\|v_{\lambda}\|_2^2\to 0$ as $\lambda\to \infty$.
\end{proof}

\begin{proof}[Proof of Theorem~\ref{thm01}]

	From Corollary \ref{cor-bound0}, Lemma~\ref{lem1102} and \eqref{teto0}, we see that $\{t_{\lambda}^2v_{\lambda}(t_{\lambda}x)\}\subset \mathscr{P}_\infty$ is a minimizing sequence for $m_\infty$ which is bounded in $E(\R^3)$ and $t_{\lambda}\to 1$ as $\lambda\to \infty$.  For any sequence $\lambda_n\to\infty$, up to subsequence, it follows from Proposition~\ref{pro1203} that $\{t_{\lambda_n}^2v_{\lambda_n}(t_{\lambda_n}x)\}$ converges weakly in $E(\R^3)$.
Applying the Concentration-Compactness principle [cf. \cite[Chapter I, Theorem 4.9]{S96}] to $t_{\lambda}^2v_{\lambda}(t_{\lambda}x)$ and following the proof of \cite{AP08}, we conclude that there exists $\{\xi_{\lambda_n}\}\subset \R^3$ and $\overline{v}_\infty\in E(\R^3)$ such that
$$t_{\lambda_n}^2v_{\lambda_n}(t_{\lambda_n}(x-\xi_{\lambda_n}))\to \overline{v}_\infty \quad \text{in}\ L^{q}(\R^3), \ q\in (3,6).$$
By using nonlocal Brezis-Lieb Lemma \cite{MMS16}*{Proposition 4.7}, we obtain that
$$
\lim_{\lambda_n\to\infty}\int_{\R^3}(I_{2}*|t_{\lambda_n}^2v_{\lambda_n}(t_{\lambda_n}x)|^2)|t_{\lambda_n}^2v_{\lambda_n}(t_{\lambda_n}x)|^2dx=\int_{\R^3}(I_{2}*|\overline{v}_\infty|^2)|\overline{v}_\infty|^2dx,
$$
which means that $\overline{v}_\infty\neq0$ since by Corollary~\ref{cor-1201} the sequence $\{M(t_{\lambda}^2v_{\lambda}(t_{\lambda}x))\}$ has a positive lower bound.  It follows from the weakly lower semi-continuity of the norm that $0=\mathcal{P}_{\infty}(t_{\lambda}^2v_{\lambda}(t_{\lambda}x))\geq \mathcal{P}_{\infty}(\overline{v}_\infty)$, then there exists a unique $t_{\infty}\in (0, 1]$ such that $t_{\infty}^2\overline{v}_{\infty}(t_{\infty}x)\in \mathscr{P}_\infty$.  Note that $\{t_{\lambda_n}^2v_{\lambda_n}(t_{\lambda_n}(x-\xi_{\lambda_n}))\}$ is also a bounded minimizing sequence for $m_\infty$, we have
$$\aligned
m_\infty+o(1)&=\widetilde{\mathcal{I}}_{\infty}(t_{\lambda_n}^2v_{\lambda_n}(t_{\lambda_n}(x-\xi_{\lambda_n})))\\
&= \frac{t_{\lambda_n}^{3}(p-3)}{2p-3}\|\nabla v_{\lambda_n}\|_2^2+\frac{t_{\lambda_n}^{3}(p-3)}{2(2p-3)}\int_{\R^3}(I_{2}*|v_{\lambda_n}|^2)|v_{\lambda_n}|^2dx \\
&\geq \frac{(p-3)}{2p-3}\|\nabla v_\infty\|_2^2+\frac{(p-3)}{2(2p-3)}\int_{\R^3}(I_{2}*|v_{\infty}|^2)|v_{\infty}|^2dx\\
&\geq \frac{t_{\infty}^3(p-3)}{2p-3}\|\nabla v_\infty\|_2^2+\frac{t_{\infty}^3(p-3)}{2(2p-3)}\int_{\R^3}(I_{2}*|v_{\infty}|^2)|v_{\infty}|^2dx\\
&=\widetilde{\mathcal{I}}_{\infty}(t_{\infty}^2\overline{v}_{\infty}(t_{\infty}x))\\
&\geq m_{\infty},
\endaligned
$$
which implies that $t_{\infty}=1$ and $t_{\lambda_n}^2v_{\lambda_n}(t_{\lambda_n}(x-\xi_{\lambda_n}))\to \overline{v}_\infty \ \text{in}\ D^{1}(\R^3)$, therefore $\overline{v}_{\infty}\in \mathscr{P}_\infty$, $\widetilde{\mathcal{I}}_{\infty}(\overline{v}_{\infty})=m_{\infty}$ and
$v_{\lambda_n}(\cdot-\xi_{\lambda_n})\to \overline{v}_\infty \ \text{in}\ E(\R^3)$ since $t_{\lambda_n}\to 1$ as $n\to\infty$. Hence, by Lemma~\ref{lem1202}, $\overline{v}_\infty$ is a ground state solution of \eqref{eqssps}.
\end{proof}

\vspace{5pt}
\noindent{\bf Acknowledgements.}
The first author is partially supported by Natural Science Foundation of China (Nos. 11901418, 11771319, 12171470).

\end{document}